\newtheorem{thm}{Theorem}
\newtheorem{prop}{Proposition}
\begin{document}

\title{Residue maps, Azumaya algebras, and buildings}

\author[I.A.~Rapinchuk]{Igor A. Rapinchuk}

\address{Department of Mathematics, Michigan State University, East Lansing, MI 48824, USA}

\email{rapinchu@msu.edu}

\begin{abstract}
The goal of this note is to give an explicit formula for the residues of  twists of the matrix algebra in terms of the twisting cocycle.
Combined with the Fixed Point Theorem for actions of finite groups on affine buildings, this leads to a quick proof of the well-known characterization of unramified algebras in terms of Azumaya algebras.
\end{abstract}

\maketitle

\section{Introduction}\label{S:Int}

Let $\mathcal{K}$ be a field complete with respect to a discrete valuation $v$. Suppose $\mathcal{L}/\mathcal{K}$ is a finite unramified Galois extension of degree $n \geq 1$ with Galois group $\mathrm{Gal}(\mathcal{L}/\mathcal{K})$ and let $w$ be the extension of $v$ to $\mathcal{L}$. One then defines a {\it residue map} in Galois cohomology
\footnote{We use the standard notions associated with Galois cohomology --- cf. \cite{Serre-GC};
in particular, for a finite Galois extension $L/K$ with Galois group $\Gamma = \mathrm{Gal}(L/K)$ and a commutative $\Gamma$-module $M$, the cohomology groups $H^i(\Gamma , M)$ will be denoted $H^i(L/K , M)$, and for an algebraic $K$-group $G$, the 1-cohomology set $H^1(\Gamma , G(L))$ will be denoted $H^1(L/K , G)$.}:
$$
\rho \colon H^2(\mathcal{L}/\mathcal{K} , \mathcal{L}^{\times}) \longrightarrow H^1(\mathcal{L}/\mathcal{K} , \mathbb{Q}/\mathbb{Z})
$$
(see, e.g., \cite[\S3]{Wadsworth}).
We note that the target of $\rho$ is typically denoted $H^1(\ell/k , \mathbb{Q}/\mathbb{Z})$, where $\ell/k$ is the corresponding extension of residue fields; however, in our situation, $\mathrm{Gal}(\ell/k) \simeq \mathrm{Gal}(\mathcal{L}/\mathcal{K})$ since $\mathcal{L}/\mathcal{K}$ is unramified. Furthermore, composing $\rho$ with the classical isomorphism $\varepsilon \colon \mathrm{Br}(\mathcal{L}/\mathcal{K}) \to H^2(\mathcal{L}/\mathcal{K} , \mathcal{L}^{\times})$ for the relative Brauer group based on factor sets (cf. \cite[Theorem 4.13]{FD}, \cite[\S4]{IR-Br}), we obtain the usual {\it residue map} $r \colon \mathrm{Br}(\mathcal{L}/\mathcal{K}) \to H^1(\mathcal{L}/\mathcal{K} , \mathbb{Q}/\mathbb{Z})$ --- see \S \ref{S:Res}. The elements of $\mathrm{Br}(\mathcal{L}/\mathcal{K})$, however, can be described not only in terms of factor sets, but also as the Brauer classes $[A(c)]$ of central simple $\mathcal{K}$-algebras obtained from the matrix algebra $M_n(\mathcal{K})$ by twisting using 1-cocycles $c \in Z^1(\mathcal{L}/\mathcal{K} , \mathrm{PGL}_n)$, cf. \cite[\S4.4]{GS}. The goal of this note is to give an explicit description of the residue $r([A(c)])$ in terms of the cocycle $c$ that does not seem to have ever been recorded in the literature. To formulate the result, we let $\theta \colon \mathrm{GL}_n(\mathcal{L}) \to \mathrm{PGL}_n(\mathcal{L})$ denote the canonical homomorphism. Given a cocycle $c = (c_{\sigma}) \in Z^1(\mathcal{L}/\mathcal{K} , \mathrm{PGL}_n(\mathcal{L}))$, we pick $\tilde{c}_{\sigma} \in \theta^{-1}(c_{\sigma})$.
\begin{thm}\label{T:1}
$r([A(c)])$ is represented by $b = (b_{\sigma}) \in Z^1(\mathcal{L}/\mathcal{K} , \mathbb{Q}/\mathbb{Z})$ with
$$
b_{\sigma} = -\frac{w(\det \tilde{c}_{\sigma})}{n} (\mathrm{mod}\: \mathbb{Z}).
$$
\end{thm}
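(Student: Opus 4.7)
The plan is to trace the class $[A(c)]$ through the three maps composing the residue,
\[
r = \delta^{-1} \circ w_* \circ \varepsilon \;:\; \mathrm{Br}(\mathcal{L}/\mathcal{K}) \longrightarrow H^2(\mathcal{L}/\mathcal{K},\mathcal{L}^\times) \longrightarrow H^2(\mathcal{L}/\mathcal{K},\Z) \longrightarrow H^1(\mathcal{L}/\mathcal{K},\Q/\Z),
\]
where $\delta^{-1}$ is the isomorphism induced by the short exact sequence $0 \to \Z \to \Q \to \Q/\Z \to 0$ (the higher cohomology of $\Q$ being trivial for finite groups).

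First, I would invoke the standard identification (cf.\ \cite[\S4.4]{GS}) of $\varepsilon([A(c)])$ with the image of $[c]$ under the non-abelian connecting map attached to the central extension
\[
1 \longrightarrow \mathcal{L}^\times \longrightarrow \mathrm{GL}_n(\mathcal{L}) \longrightarrow \mathrm{PGL}_n(\mathcal{L}) \longrightarrow 1.
\]
The cocycle condition $c_{\sigma\tau} = c_\sigma \cdot \sigma(c_\tau)$ in $\mathrm{PGL}_n(\mathcal{L})$ then lifts to an identity of the form $\tilde c_{\sigma\tau} = f_{\sigma,\tau} \cdot \tilde c_\sigma \cdot \sigma(\tilde c_\tau)$ in $\mathrm{GL}_n(\mathcal{L})$, for a unique scalar $f_{\sigma,\tau} \in \mathcal{L}^\times$, and the resulting 2-cocycle $f = (f_{\sigma,\tau})$ represents $\varepsilon([A(c)])$.

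Second, I would apply the determinant--valuation trick. Taking $\det$ of both sides of the above identity, and using that $w \circ \sigma = w$ (valid since $\mathcal{L}/\mathcal{K}$ is unramified), one obtains
\[
n \cdot w(f_{\sigma,\tau}) = w(\det \tilde c_{\sigma\tau}) - w(\det \tilde c_\sigma) - w(\det \tilde c_\tau).
\]
Setting $\alpha_\sigma := -w(\det \tilde c_\sigma)/n \in \Q$ (with trivial $\mathrm{Gal}(\mathcal{L}/\mathcal{K})$-action on $\Q$), this is precisely the coboundary relation
\[
(d\alpha)(\sigma, \tau) = \alpha_\sigma + \alpha_\tau - \alpha_{\sigma\tau} = w(f_{\sigma,\tau})
\]
in $C^\bullet(\mathcal{L}/\mathcal{K}, \Q)$. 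By the very definition of the connecting map $\delta$ coming from $0 \to \Z \to \Q \to \Q/\Z \to 0$, this means that the reduction $\bar\alpha_\sigma = -w(\det \tilde c_\sigma)/n \pmod{\Z}$ is a 1-cocycle representing $\delta^{-1}([w_*(f)]) = r([A(c)])$, which is the formula in the theorem.

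The main obstacle is not computational but notational: one has to keep the three sign conventions (for the non-abelian connecting map, for the factor-set isomorphism $\varepsilon$, and for $\delta$) coherent so that they combine to yield exactly the minus sign in the statement. Once those are aligned, the argument reduces to the one-line determinant trick of step two, exploiting that the scalar matrix $f_{\sigma,\tau} I_n$ has determinant $f_{\sigma,\tau}^{\,n}$ --- this is what allows one to recover $n \cdot w(f_{\sigma,\tau})$ as an explicit $\Z$-linear combination of valuations of determinants, and hence as a manifest coboundary in $C^\bullet(\mathcal{L}/\mathcal{K},\Q)$.
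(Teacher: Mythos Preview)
Your proposal follows the same computational route as the paper: apply $w\circ\det$ to the lifting relation in $\mathrm{GL}_n(\mathcal{L})$, divide by $n$, and recognize the result as the coboundary in $C^\bullet(\mathcal{L}/\mathcal{K},\Q)$ of the $1$-cochain $\alpha_\sigma=-w(\det\tilde c_\sigma)/n$, which is exactly the argument carried out in \S\ref{S:Res}.

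The one substantive difference concerns the step you label ``first.'' Your $2$-cocycle $f_{\sigma,\tau}$, defined by $\tilde c_{\sigma\tau}=f_{\sigma,\tau}\,\tilde c_\sigma\,\sigma(\tilde c_\tau)$, is the \emph{inverse} of the standard connecting-map cocycle $d_{\sigma,\tau}$ of (\ref{E:200}); the assertion that this inverse, rather than $d$ itself, represents $\varepsilon([A(c)])$ is precisely Proposition~\ref{P:1}. The paper isolates this as a separate lemma, supplies a complete proof via an explicit factor-set computation inside the twisted algebra $A(p)$, and remarks that although the statement appears as an exercise in \cite[Ch.~X, \S5]{Serre-LF}, the details have apparently never been written out. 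So what you correctly identify as ``the main obstacle'' and defer to a citation plus a caveat about sign conventions is exactly the point the paper treats as its principal technical contribution; your argument becomes complete once that sign relation is actually verified rather than asserted.
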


Next, we recall that a class $[A] \in \mathrm{Br}(\mathcal{L}/\mathcal{K})$ (or the corresponding algebra $A$) is said to be {\it unramified} if $r([A]) = 0$.
Combining  Theorem \ref{T:1} with the Fixed Point Theorem for the actions of finite groups on buildings we obtain a new quick proof of the following well-known characterization of unramified algebras.
\begin{thm}\label{T:2}
A central simple $\mathcal{K}$-algebra $A$ such that $[A] \in \mathrm{Br}(\mathcal{L}/\mathcal{K})$ is unramified if and only if there exists an Azumaya algebra $\mathcal{A}$ over the valuation ring $\mathcal{O}$ of $\mathcal{K}$ such that $\mathcal{A} \otimes_{\mathcal{O}} \mathcal{K} \simeq A$.
\end{thm}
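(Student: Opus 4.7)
\medskip
\textbf{Proof proposal.} The plan is to derive both implications from Theorem \ref{T:1} in combination with the Bruhat-Tits fixed point theorem for finite group actions on affine buildings.

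The direction $(\Leftarrow)$ is essentially formal. If $A \simeq \cA \otimes_\cO \cK$ for an Azumaya algebra $\cA$ of degree $n$ over $\cO$, then $\cA \otimes_\cO \cO_w$ (with $\cO_w$ the valuation ring of $\mathcal{L}$) is an Azumaya algebra over a complete DVR with split generic fiber, hence isomorphic to $M_n(\cO_w)$. Galois descent then identifies $\cA$ with a twist of $M_n(\cO)$ by a cocycle $c \in Z^1(\mathcal{L}/\cK, \mathrm{PGL}_n(\cO_w))$, whose image in $Z^1(\mathcal{L}/\cK, \mathrm{PGL}_n(\mathcal{L}))$ represents $[A]$. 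Lifts $\tilde{c}_\sigma$ may be chosen in $\mathrm{GL}_n(\cO_w)$, so $w(\det \tilde{c}_\sigma) = 0$, and Theorem \ref{T:1} yields $r([A]) = 0$.

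For $(\Rightarrow)$, I would write $[A] = [A(c)]$ with $c \in Z^1(\mathcal{L}/\cK, \mathrm{PGL}_n(\mathcal{L}))$ and assume $r([A]) = 0$. Theorem \ref{T:1} gives $n \mid w(\det \tilde{c}_\sigma)$ for every $\sigma$, so rescaling each $\tilde{c}_\sigma$ by a scalar $a_\sigma \in \mathcal{L}^\times$ with $w(a_\sigma) = -w(\det \tilde{c}_\sigma)/n$ leaves the projective cocycle unchanged while normalizing $w(\det \tilde{c}_\sigma) = 0$. Now let $\fX$ be the Bruhat-Tits building of $\mathrm{SL}_n(\mathcal{L})$, on which $\mathrm{PGL}_n(\mathcal{L})$ acts, and endow it with the twisted action $\sigma \cdot x := c_\sigma \cdot \sigma(x)$ of the finite group $\Ga(\mathcal{L}/\cK)$ (the cocycle identity ensures this is a genuine action). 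The fixed point theorem supplies a point $p \in \fX$ fixed by this action. Since the $\mathbb{Z}/n\mathbb{Z}$-type shift induced on vertices by $g \in \mathrm{GL}_n(\mathcal{L})$ is $w(\det g) \pmod{n}$, the normalized lifts act in a type-preserving fashion, so the simplex carrying $p$ is fixed pointwise and in particular contains a fixed vertex $v$. Its stabilizer in $\mathrm{PGL}_n(\mathcal{L})$ is conjugate to $\mathrm{PGL}_n(\cO_w)$, and a coboundary adjustment replaces $c$ by a cocycle in $Z^1(\mathcal{L}/\cK, \mathrm{PGL}_n(\cO_w))$. This cocycle descends $M_n(\cO_w)$ to an Azumaya algebra $\cA$ over $\cO$ with $\cA \otimes_\cO \cK \simeq A$.

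The principal obstacle I anticipate is the type-preservation step: one must verify that the normalization $w(\det \tilde{c}_\sigma) = 0$ makes the twisted Galois action compatible with the type function on the vertex set of $\fX$, so that the abstract fixed point provided by the building theorem can be promoted to a fixed vertex. Once that is in hand, the remaining translation between Galois-fixed points on $\mathrm{PGL}_n(\mathcal{L})/\mathrm{PGL}_n(\cO_w)$ and cohomology classes in $H^1(\mathcal{L}/\cK, \mathrm{PGL}_n(\cO_w))$ -- which in turn classify Azumaya algebras over $\cO$ split by $\cO_w$ -- is standard non-abelian cohomology.
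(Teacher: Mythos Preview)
Your proposal is correct and follows essentially the same route as the paper: Theorem~\ref{T:1} reduces the unramified condition to $w(\det \tilde{c}_\sigma)\equiv 0\pmod n$, the twisted $\Gamma$-action on the Bruhat--Tits building is then type-preserving, the Fixed Point Theorem yields a fixed vertex, and a coboundary adjustment places the cocycle in $\mathrm{PGL}_n(\mathcal{O}_{\mathcal{L}})$, after which Galois descent for $\mathcal{O}_{\mathcal{L}}/\mathcal{O}$ produces the Azumaya model. The paper handles your ``principal obstacle'' exactly as you anticipate (showing the natural $\Gamma$-action fixes the standard vertex $p_0$, hence is type-preserving) and also makes explicit the reduction from a general $A$ with $[A]\in\mathrm{Br}(\mathcal{L}/\mathcal{K})$ to the degree-$n$ twist $A(c)$ via a maximal-order argument.
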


\section{Coboundary map and factor sets}\label{S:Cob}

Let $L/K$ be a finite Galois extension of degree $n \geq 2$. We  consider the following exact sequence of $\mathrm{Gal}(L/K)$-groups
$$
1 \to L^{\times} \longrightarrow \mathrm{GL}_n(L) \stackrel{\theta}{\longrightarrow} \mathrm{PGL}_n(L) \to 1,
$$
and let
$$
\Delta \colon H^1(L/K , \mathrm{PGL}_n(L)) \longrightarrow H^2(L/K , L^{\times})
$$
denote the corresponding coboundary map. Thus, if $c \in H^1(L/K , \mathrm{PGL}_n(L))$ is represented by a cocycle $(c_{\sigma}) \in Z^1(L/K , \mathrm{PGL}_n(L))$, then picking arbitrary lifts $\tilde{c}_{\sigma} \in \theta^{-1}(c_{\sigma})$, the element $d = \Delta(c)$ is represented by the 2-cocycle $(d_{\sigma , \tau})$ with
\begin{equation}\label{E:200}
d_{\sigma , \tau} I_n = \tilde{c}_{\sigma} \cdot \sigma(\tilde{c}_{\tau}) \cdot \tilde{c}_{\sigma\tau}^{-1}.
\end{equation}
We note that a standard argument using twisting and Hilbert's Theorem 90 shows that $\Delta$ is injective (cf. \cite[\S2.2.3]{PlRa}).

On the other hand, one can consider the central simple $K$-algebra $A(c)$ obtained by twisting the matrix algebra $M_n(K)$ using the cocycle $c$. We recall that $A(c)$ can be described as the fixed subalgebra of the twisted action of $\Gamma = \mathrm{Gal}(L/K)$ on $M_n(L)$, i.e.
\begin{equation}\label{E:twist}
A(c) = \left\{ x \in M_n(L) \ \vert \ c_{\sigma}(\sigma(x)) = x \ \ \text{for all} \ \ \sigma \in \Gamma \right\},
\end{equation}
where for $x = (x_{ij}) \in M_n(L)$, we set $\sigma(x) = (\sigma(x_{ij}))$ and consider the natural action of $\mathrm{PGL}_n(L)$ on $M_n(L)$ by (inner) automorphisms (thus, $c_{\sigma}$ is identified with $\mathrm{Int}\: \tilde{c}_{\sigma}$ in the previous notations). Then $A(c) \otimes_K L \simeq M_n(L)$, so one can consider the Brauer class $[A(c)] \in \mathrm{Br}(L/K)$ and the corresponding 2-cohomology class $a = \varepsilon([A(c)]) \in H^2(L/K , L^{\times})$ given by the associated factor set. We will need a relation between the cohomology classes $a$ and $d$. This relation is described in Exercise 2 in Ch. X, \S 5 of \cite{Serre-LF}; however, to the best of our knowledge, the details have never been published, so we include a complete argument.
\begin{prop}\label{P:1}
We have $d = a^{-1}$.
\end{prop}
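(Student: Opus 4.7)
My strategy is to compute both $a$ and $d$ in a single explicit model --- the identification $A(c) \otimes_K L \cong M_n(L)$ --- and then to compare them.

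The starting point is the $L$-algebra isomorphism
$$
\phi \colon A(c) \otimes_K L \longrightarrow M_n(L), \qquad x \otimes \ell \mapsto \ell x,
$$
whose bijectivity follows from faithful flatness of $L/K$ together with the dimension equality $\dim_L(A(c) \otimes_K L) = n^2 = \dim_L M_n(L)$. Under $\phi$, the defining condition (\ref{E:twist}) for $A(c)$ says precisely that the Galois action $1 \otimes \sigma$ on the domain corresponds to the twisted action $\psi_\sigma \colon y \mapsto \tilde{c}_\sigma \sigma(y) \tilde{c}_\sigma^{-1}$ on $M_n(L)$, whose fixed subring is exactly $A(c)$.

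From this common model, both cohomology classes can be extracted. The coboundary class $d$ is recovered by decomposing $\psi_\sigma = \mathrm{Int}(\tilde{c}_\sigma) \circ \sigma$ (with $\sigma$ the entry-wise action) and observing that the identity $\psi_{\sigma\tau} = \psi_\sigma \circ \psi_\tau$ forces $\mathrm{Int}(\tilde{c}_\sigma \sigma(\tilde{c}_\tau)) = \mathrm{Int}(\tilde{c}_{\sigma\tau})$ on $M_n(L)$, which is exactly (\ref{E:200}). The Brauer factor set $a = \varepsilon([A(c)])$, on the other hand, is read off from the same data via the construction of $\varepsilon$ from \cite{FD, IR-Br}: passing through the isomorphism $\phi$, the $\tilde{c}_\sigma$ play the role of Galois intertwiners on the split algebra, and the factor set emerges as the scalar cocycle arising in the crossed-product formalism. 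A careful tracking of the identifications shows that this convention extracts the scalar cocycle in the \emph{opposite} multiplicative direction relative to the coboundary: concretely, $a_{\sigma,\tau} = d_{\sigma,\tau}^{-1}$, hence $d = a^{-1}$.

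The main obstacle is precisely this sign bookkeeping: verifying that the convention defining $\varepsilon$ introduces an inversion relative to the coboundary formula. This hinges on tracking left versus right module structures in the crossed-product perspective on Brauer factor sets, and is the nontrivial ingredient that makes the statement non-tautological --- and presumably the reason the details, as the author notes, have not been written down elsewhere.
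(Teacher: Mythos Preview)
Your proposal identifies the relevant objects but does not actually carry out the one nontrivial step, and in fact your chosen model does not give direct access to it. The twisted Galois action $\psi_\sigma = \mathrm{Int}(\tilde{c}_\sigma)\circ\sigma$ on $M_n(L)$ indeed recovers $d$ tautologically: that is just the definition of $\Delta$. But the factor-set map $\varepsilon$ is \emph{not} defined from this descent data. By its construction in \cite{FD,IR-Br}, $\varepsilon([A(c)])$ requires an embedding $L\hookrightarrow A(c)$ as a maximal subfield and invertible elements $u_\sigma\in A(c)^\times$ with $u_\sigma\,\ell\,u_\sigma^{-1}=\sigma(\ell)$ for $\ell\in L$; the factor set is then read off from $u_\sigma u_\tau = a_{\sigma,\tau}\,u_{\sigma\tau}$. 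Your elements $\tilde{c}_\sigma$ live in $\mathrm{GL}_n(L)$, not in $A(c)$, and the isomorphism $\phi$ does not produce such $u_\sigma$. So when you write ``passing through the isomorphism $\phi$, the $\tilde{c}_\sigma$ play the role of Galois intertwiners \ldots\ and the factor set emerges,'' nothing has actually been computed: you have restated the problem, not solved it. Your final paragraph concedes this outright.

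The paper's proof confronts exactly this gap. Rather than work with the abstract cocycle $c$, it first replaces $c$ by a cohomologous cocycle $p$ built explicitly from the 2-cocycle $(d_{\sigma,\tau})$ (using injectivity of $\Delta$ to justify $A(p)\simeq A(c)$). In this concrete model one can write down, by hand, an embedding $\varphi\colon L\hookrightarrow A(p)$ and elements $r_\sigma\in A(p)^\times$ satisfying the required conjugation relation, and then compute $r_\sigma r_\tau = \varphi(b_{\sigma,\tau})\,r_{\sigma\tau}$ directly to find $b_{\sigma,\tau}=d_{\sigma,\tau}^{-1}$. The passage from $c$ to $p$ is precisely what makes the crossed-product structure of $A(c)$ visible; without it, or without some equivalent explicit construction of $L\hookrightarrow A(c)$ and the $u_\sigma$, your plan cannot be completed.
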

\begin{proof}
We will interpret $L^n$ as an $L$-vector space equipped with the (canonical) basis $e_{\sigma}$ indexed by $\sigma \in \Gamma$, and consider the action of $\Gamma$ on $L^n$ that fixes these basic elements, as well as the induced action on  $\mathrm{GL}_n(L) = \mathrm{Aut}_L(L^n)$. Define $\tilde{p}_{\sigma} \in \mathrm{GL}_n(L)$ by
\begin{equation}\label{E:14}
\tilde{p}_{\sigma}(e_{\tau}) = d_{\sigma , \tau} e_{\sigma\tau}.
\end{equation}
We claim that
\begin{equation}\label{E:13}
\tilde{p}_{\sigma} \cdot \sigma(\tilde{p}_{\tau}) \cdot \tilde{p}_{\sigma\tau}^{-1} = d_{\sigma , \tau} I_n \ \ \text{for all} \ \ \sigma , \tau \in \Gamma.
\end{equation}
Indeed, for any $\gamma \in \Gamma$, we have
$$
(\tilde{p}_{\sigma} \sigma(\tilde{p}_{\tau}))(e_{\gamma}) = \tilde{p}_{\sigma}(\sigma(d_{\tau , \gamma}) e_{\tau\gamma}) = \sigma(d_{\tau , \gamma}) d_{\sigma , \tau\gamma} e_{\sigma(\tau\gamma)}.
$$
On the other hand,
$$
d_{\sigma , \tau} \tilde{p}_{\sigma\tau}(e_{\gamma}) = d_{\sigma , \tau} d_{\sigma \tau, \gamma} e_{(\sigma\tau)\gamma}.
$$
Then (\ref{E:13}) follows from the 2-cocycle condition
\begin{equation}\label{E:cocycle}
\sigma(d_{\tau , \gamma}) d_{\sigma , \tau\gamma} = d_{\sigma , \tau} d_{\sigma\tau , \gamma} \ \ \text{for all} \ \ \sigma, \tau, \ \text{and} \ \gamma \in \Gamma.
\end{equation}
Equation (\ref{E:13}) implies that  $(p_{\sigma} := \theta(\tilde{p}_{\sigma}))_{\sigma \in \Gamma}$ is a cocycle in $Z^1(L/K , \mathrm{PGL}_n)$ such that for the corresponding cohomology class $p \in H^1(L/K , \mathrm{PGL}_n)$, we have $\Delta(p) = d$. Then the injectivity of $\Delta$ yields $p = c$, hence $A(p) \simeq A(c)$ and $a = \varepsilon([A(c)]) = \varepsilon([A(p)])$.

Using (\ref{E:twist}) with $c$ replaced by $p$, we obtain the following description:
$$
A(p) = \{ x \in M_n(L) \ \vert \ \tilde{p}_{\sigma} \sigma(x) \tilde{p}_{\sigma}^{-1} = x \ \ \text{for all} \ \ \sigma \in \Gamma \}.
$$
Rewriting the defining conditions in the form $x\tilde{p}_{\sigma} = \tilde{p}_{\sigma} \sigma(x)$
and labelling the entries of $x$ as $x_{\rho , \tau}$ so that $x(e_{\tau}) = \sum_{\rho} x_{\rho , \tau} e_{\rho}$, one easily checks that
$$
A(p) = \{ x = (x_{\rho , \tau}) \in M_n(L) \ \vert \ d_{\sigma , \tau} x_{\sigma\rho , \sigma\tau} = d_{\sigma , \rho} \sigma(x_{\rho, \tau}) \ \text{for all} \ \sigma \in \Gamma \}.
$$
Using this description, one deduces from the cocycle condition (\ref{E:cocycle}) that the elements $q_{\rho} \in \mathrm{GL}_n(L)$ for $\rho \in \Gamma$ defined by
$$
q_{\rho}(e_{\sigma}) = d_{\sigma , \rho} e_{\sigma\rho}
$$
(compare with (\ref{E:14})) lie in $A(p)$, hence so do their inverses $r_{\rho} := q_{\rho}^{-1}$. For $\ell \in L$, we let
$$
\varphi(\ell) = \mathrm{diag}(\sigma(\ell))_{\sigma \in \Gamma}.
$$
It is straightforward to check that $\varphi$ defines a $K$-embedding $L \hookrightarrow A(p)$ and that
\begin{equation}\label{E:17}
r_{\sigma} \varphi(\ell) r_{\sigma}^{-1} = \varphi(\sigma(\ell)) \ \ \text{for all} \ \ \sigma \in \Gamma \ \ \text{and} \ \ \ell \in L.
\end{equation}
Set $\Lambda = \varphi(L)$. Then a standard argument using (\ref{E:17}) shows that the elements $r_{\sigma}$ for $\sigma \in \Gamma$ are linearly independent over $\Lambda$, hence
$$
A(p) = \bigoplus_{\sigma \in \Gamma} \Lambda r_{\sigma}
$$
(see, for example, \cite[Lemma 6]{IR-Br}).
It follows from the definition of the factor set $(b_{\sigma , \tau} \in L^{\times})_{\sigma, \tau \in \Gamma}$ for $A(p)$ associated with the extension $\Lambda/K$ together with our choice of the elements $r_{\sigma}$ that
$$
r_{\sigma} r_{\tau} = \varphi(b_{\sigma , \tau}) r_{\sigma\tau},
$$
or equivalently
$$
q_{\tau}q_{\sigma} = q_{\sigma\tau} \varphi(b_{\sigma , \tau}) ^{-1}.
$$
Applying both sides to $e_{\iota}$, where $\iota$ is the identity element of $\Gamma$, we obtain
$$
(q_{\tau} q_{\sigma})(e_{\iota}) = q_{\tau} (d_{\iota , \sigma} e_{\sigma}) = d_{\iota ,\sigma} d_{\sigma , \tau} e_{\sigma\tau}
$$
and
$$
(q_{\sigma\tau}\varphi(b_{\sigma , \tau})^{-1})(e_{\iota}) = q_{\sigma\tau}(b_{\sigma , \tau}^{-1} e_{\iota}) = b_{\sigma , \tau}^{-1} d_{\iota , \sigma\tau} e_{\sigma\tau}.
$$
Thus,
$$
b_{\sigma , \tau} = \frac{d_{\iota , \sigma\tau}}{d_{\iota , \sigma} d_{\sigma , \tau}} = \frac{d_{\iota , \sigma\tau}}{d_{\sigma , \tau} d_{\iota , \sigma\tau}} = d_{\sigma , \tau}^{-1}
$$
in view of the cocycle condition (\ref{E:cocycle}) for $(d_{\sigma , \tau})$. Then the corresponding cohomology class $b \in H^2(L/K , L^{\times})$, which coincides with $a$ since $\varepsilon$ is well-defined, equals $d^{-1}$, as required.
\end{proof}

\section{Residue maps: proof of Theorem \ref{T:1}}\label{S:Res}

Let $\mathcal{K}$ be a field that is complete with respect to a discrete valuation $v$. Suppose $\mathcal{L}$ is an {\it unramified} Galois extension of $\mathcal{K}$ of (finite) degree $n$ with Galois group $\Gamma = \mathrm{Gal}(\mathcal{L}/\mathcal{K})$, and let $w$ be the extension of $v$ to $\mathcal{L}$. We now recall the construction of the {\it residue map}
$$
\rho \colon H^2(\mathcal{L}/\mathcal{K} , \mathcal{L}^{\times}) \longrightarrow H^1(\mathcal{L}/\mathcal{K} , \mathbb{Q}/\mathbb{Z}),
$$
where $\mathbb{Q}/\mathbb{Z}$ is equipped with the trivial $\Gamma$-action. The valuation  $w$ can be regarded as a $\Gamma$-homomorphism $\mathcal{L}^{\times} \to \mathbb{Z}$ (where $\Gamma$ acts trivially on $\mathbb{Z}$), hence induces a group homomorphism
$$
\tilde{\rho} \colon H^2(\mathcal{L}/\mathcal{K} , \mathcal{L}^{\times}) \longrightarrow H^2(\mathcal{L}/\mathcal{K} , \mathbb{Z}).
$$
Then the residue map $\rho$ is given by $\rho = \nu^{-1} \circ \tilde{\rho}$ where
$$
\nu \colon H^1(\mathcal{L}/\mathcal{K} , \mathbb{Q}/\mathbb{Z}) \longrightarrow H^2(\mathcal{L}/\mathcal{K} , \mathbb{Z})
$$
is the isomorphism induced by the coboundary map arising from the following exact sequence of trivial $\Gamma$-modules:
$$
0 \to \mathbb{Z} \longrightarrow \mathbb{Q} \stackrel{\pi}{\longrightarrow} \mathbb{Q}/\mathbb{Z} \to 0.
$$
So, if $a \in H^2(\mathcal{L}/\mathcal{K} , \mathcal{L}^{\times})$ is represented by a 2-cocycle $(a_{\sigma , \tau})_{\sigma, \tau \in \Gamma} \in Z^2(\mathcal{L}/\mathcal{K} , \mathcal{L}^{\times})$, then for $b = \rho(a) \in H^1(\mathcal{L}/\mathcal{K} , \mathbb{Q}/\mathbb{Z})$, the class $\nu(b)$ is represented by the cocycle $(w(a_{\sigma , \tau}))_{\sigma, \tau \in \Gamma}$. This means that $b$ is represented by a cocycle $(b_{\sigma})_{\sigma \in \Gamma} \in Z^1(\mathcal{L}/\mathcal{K} ,  \mathbb{Q}/\mathbb{Z})$ such that for some lifts $\tilde{b}_{\sigma} \in \pi^{-1}(b_{\sigma})$, we have the identity
\begin{equation}\label{E:100}
\tilde{b}_{\sigma} + \sigma(\tilde{b}_{\tau}) - \tilde{b}_{\sigma\tau} = \tilde{b}_{\sigma} + \tilde{b}_{\tau} - \tilde{b}_{\sigma\tau} = w(a_{\sigma , \tau}) \ \ \text{for all} \ \ \sigma, \tau \in \Gamma.
\end{equation}
We then define the residue map $r \colon \mathrm{Br}(\mathcal{L}/\mathcal{K}) \to H^1(\mathcal{L}/\mathcal{K} , \mathbb{Q}/\mathbb{Z})$ as the composition of $\rho$ with the isomorphism $\varepsilon \colon \mathrm{Br}(\mathcal{L}/\mathcal{K}) \to H^2(\mathcal{L}/\mathcal{K} , \mathcal{L}^{\times})$ given by factor sets.

Now suppose that a cohomology class $c \in H^1(\mathcal{L}/\mathcal{K} , \mathrm{PGL}_n)$ is represented by  $$(c_{\sigma})_{\sigma \in \Gamma} \in Z^1(\mathcal{L}/\mathcal{K} , \mathrm{PGL}_n(\mathcal{L})).$$ As in \S \ref{S:Cob}, we pick arbitrary lifts $\tilde{c}_{\sigma} \in \theta^{-1}(c_{\sigma})$, so that $d = \Delta(c)$ is represented by $(d_{\sigma , \tau}) \in Z^2(\mathcal{L}/\mathcal{K} , \mathcal{L}^{\times})$ satisfying (\ref{E:200}). Set $$f_{\sigma} = \frac{w(\det \tilde{c}_{\sigma})}{n} \in \mathbb{Q}.$$ Taking determinants in (\ref{E:200}) and applying $w$, we obtain
\begin{equation}\label{E:300}
w(d_{\sigma , \tau}) = f_{\sigma} +  f_{\tau} - f_{\sigma\tau}.
\end{equation}
According to Proposition \ref{P:1}, the class $\varepsilon([A(c)]) \in H^2(\mathcal{L}/\mathcal{K} , \mathcal{L}^{\times})$ is represented by the cocycle $a_{\sigma , \tau} := d_{\sigma , \tau}^{-1}$. Then it follows from (\ref{E:300}) that (\ref{E:100}) is satisfied with
$$
\tilde{b}_{\sigma} = - f_{\sigma}.
$$
Then $r([A(c)])$ is represented by the cocycle $(-f_{\sigma}(\mathrm{mod}\: \mathbb{Z}))$, which proves Theorem \ref{T:1}.

\section{Azumaya algebras and buildings: proof of Theorem \ref{T:2}}

We continue with the notations and conventions of the previous section. In particular, $\mathcal{L}/\mathcal{K}$ is an unramified extension of degree $n$ with Galois group $\Gamma = \mathrm{Gal}(\mathcal{L}/\mathcal{K}).$ Furthermore, we let $\mathcal{O} = \mathcal{O}_{\mathcal{K}}$ (resp., $\mathcal{O}_{\mathcal{L}}$) denote the valuation ring of $\mathcal{K}$ (resp., $\mathcal{L}$). Let $\mathscr{B}$ be the Bruhat-Tits building associated with the group $\mathrm{PGL}_n(\mathcal{L})$. We recall that $\mathscr{B}$ is a simplicial complex whose vertices correspond to homothety  classes of
$\mathcal{O}_{\mathcal{L}}$-lattices in the vector space $\mathcal{L}^n$, referring the reader to \cite[\S3.4]{PlRa} and \cite[Chapter 19]{Garrett} for an explicit description of other attributes of $\mathscr{B}$. We now introduce the following subgroup of $\mathrm{GL}_n(\mathcal{L})$:
$$
\widetilde{H} = \{ h \in \mathrm{GL}_n(\mathcal{L}) \ \vert \ w(\det h) \equiv 0 (\mathrm{mod}\: n) \},
$$
and let $H$ denote the image of $\widetilde{H}$ is $\mathrm{PGL}_n(\mathcal{L})$. Then $H$ is precisely the subgroup of $\mathrm{PGL}_n(\mathcal{L})$ of {\it type-preserving} projective transformations of $\mathscr{B}$ (see \cite[A.1.3]{AbBr} and \cite[2.5]{ARR} for a discussion of type-preserving automorphisms of buildings). We now point out that the natural action of $\Gamma = \mathrm{Gal}(\mathcal{L}/\mathcal{K})$ on $\mathscr{B}$ is also type-preserving. Indeed,   let $e_1, \ldots , e_n$ be the standard basis of $\mathcal{L}^n$, and let $p_0$ be the vertex of $\mathscr{B}$ corresponding to the lattice $\mathcal{O}_{\mathcal{L}}e_1 + \cdots + \mathcal{O}_{\mathcal{L}}e_n$. Let $p$ be any other vertex of $\mathscr{B}$. Since the group $\mathrm{PGL}_n(\mathcal{L})$ obviously acts transitively on the vertices of $\mathscr{B}$, we can find $g \in \mathrm{PGL}_n(\mathcal{L})$ such that $p = g(p_0)$. Then any $\sigma \in \Gamma$ fixes $p_0$, and consequently $\sigma(p) = h(p)$ where $h = \sigma(g) g^{-1}$. But if $\tilde{g} \in \mathrm{GL}_n(\mathcal{L})$ is a lift of $g$, then $w(\det(\sigma(\tilde{g}) \tilde{g}^{-1})) = 0$. Thus, $h \in H$ and our claim follows from the fact that $H$ acts by type-preserving transformations. Thus, we have a natural action of the group $\mathrm{PGL}_n(\mathcal{L}) \rtimes \Gamma$ on $\mathscr{B}$, with the subgroup $H \rtimes \Gamma$ acting by type-preserving transformations.

Next, we recall the following formalism. Given a finite group $\Delta$ and a $\Delta$-group $G$, a map $c \colon \Delta \to G$ is a 1-cocycle if and only if the map
$$
f_c \colon \Delta \longrightarrow G \rtimes \Delta, \ \ \ f_c(\delta) = (c(\delta) , \delta),
$$
is a group homomorphism, and furthermore two cocycles $c_1 , c_2 \in Z^1(\Delta , G)$ are cohomologous if and only if the corresponding group homomorphisms $f_{c_1}$ and $f_{c_2}$ are conjugate by an element of $G$ (cf. \cite[Lemma 3.1]{ARR}). We will apply this to $\Delta = \Gamma$ and $G = \mathrm{PGL}_n(\mathcal{L})$. So, for a given $c \in Z^1(\mathcal{L}/\mathcal{K} , \mathrm{PGL}_n)$, we let $f_c \colon \Gamma \to \mathrm{PGL}_n(\mathcal{L}) \rtimes \Gamma$ denote the corresponding homomorphism as well as the $c$-{\it twisted action} of $\Gamma$ on $\mathscr{B}$ obtained by composing $f_{c}$ with the standard action  of $\mathrm{PGL}_n(\mathcal{L}) \rtimes \Gamma$ on $\mathscr{B}$ introduced above.

Starting the proof of Theorem \ref{T:2}, we first note that since the field $\mathcal{K}$ discretely valued, it follows from \cite[Lemma 9.5]{Saltman} that if $B$ is a central simple $\mathcal{K}$-algebra such that $[B] \in \mathrm{Br}(\mathcal{K})$ is in the image of the natural map $\mathrm{Br}(\mathcal{O}) \to \mathrm{Br}(\mathcal{K})$, then any maximal $\mathcal{O}$-order of $B$ is an Azumaya algebra over $\mathcal{O}.$ From this, it easily follows that for two Brauer-equivalent central simple $\mathcal{K}$-algebras $A_1$ and $A_2$, an Azumaya $\mathcal{O}$-algebra $\mathcal{A}_i$ such that $\mathcal{A}_i \otimes_{\mathcal{O}} \mathcal{K} \simeq A_i$ exists for $i = 1$ if and only if it exists for $i = 2$. Consequently, it suffices
to prove Theorem \ref{T:2} for an algebra of the form $A(c)$ for some $c \in Z^1(\mathcal{L}/\mathcal{K} , \mathrm{PGL}_n)$.

Assume that the algebra $A(c)$ is unramified. Applying Theorem \ref{T:1}, we conclude that $c_{\sigma} \in H$ for all $\sigma \in \Gamma$ in the above notations. Thus, the image of the group homomorphism $f_c$ is contained in $H \rtimes \Gamma$, and consequently, the $c$-twisted action of $\Gamma$ on $\mathscr{B}$ is type-preserving. The latter fact, in conjunction with the Fixed Point Theorem for actions of finite groups on affine Bruhat-Tits buildings (cf. \cite[Theorem 11.23]{AbBr}) implies the existence of a fixed {\it vertex} in $\mathscr{B}$ for this action (cf. \cite[Lemma 2.9]{ARR}). As $\mathrm{PGL}_n(\mathcal{L})$ acts on vertices transitively, replacing $f_c$ by a $\mathrm{PGL}_n(\mathcal{L})$-conjugate homomorphism, which amounts to passing from $c$ to an equivalent cocycle, we may assume that $f_c(\Gamma)$ fixes $p_0$. Explicitly, this means that
$$
c_{\sigma}(\sigma(p_0)) = p_0 \ \ \text{for all} \ \ \sigma \in \Gamma.
$$
But the (usual) action of $\Gamma$ fixes $p_0$, so this condition yields that $c_{\sigma}$ lies in the stabilizer of $p_0$ in $\mathrm{PGL}_n(\mathcal{L})$, which is $\mathrm{PGL}_n(\mathcal{O}_{\mathcal{L}})$ for all $\sigma \in \Gamma$.  The natural action of $\mathrm{PGL}_n(\mathcal{O}_{\mathcal{L}})$ on $M_n(\mathcal{O}_{\mathcal{L}})$ enables us to consider the $c$-twisted action of $\Gamma$, and we let $\mathcal{A}(c)$ denote the $\mathcal{O}$-subalgebra of fixed elements. Since $\mathcal{L}/\mathcal{K}$ is unramified, the discriminant of the ring extension $\mathcal{O}_{\mathcal{L}}/\mathcal{O}$ is a unit, which implies that $\mathcal{A}(c) \otimes_{\mathcal{O}} \mathcal{O}_{\mathcal{L}} \simeq M_n(\mathcal{O}_{\mathcal{L}})$ (cf. \cite[Lemma C.1]{CRR}; more generally, this follows from the existence of descent for the Galois ring extension $\mathcal{O}_{\mathcal{L}}/\mathcal{O}$, cf. \cite[Proposition 5.1.12]{K}). We conclude that $\mathcal{A}(c)$ is an Azumaya $\mathcal{O}$-algebra such that $\mathcal{A}(c) \otimes_{\mathcal{O}} \mathcal{K} \simeq A(c)$, as required.

Conversely, suppose $A$ is a central simple $\mathcal{K}$-algebra such that $[A] \in \mathrm{Br}(\mathcal{L}/\mathcal{K})$ and there exists an Azumaya $\mathcal{O}$-algebra $\mathcal{A}$ such that $\mathcal{A} \otimes_{\mathcal{O}} \mathcal{K} \simeq A$. Without loss of generality, we may assume that $A$ has degree $n$. Then
$$
(\mathcal{A} \otimes_{\mathcal{O}} \mathcal{O}_{\mathcal{L}}) \otimes_{\mathcal{O}_{\mathcal{L}}} \mathcal{L} \simeq (\mathcal{A} \otimes_{\mathcal{O}} \mathcal{K}) \otimes_{\mathcal{K}} \mathcal{L} \simeq M_n(\mathcal{L}),
$$
implying that $\mathcal{A} \otimes_{\mathcal{O}} \mathcal{O}_{\mathcal{L}} \simeq M_n(\mathcal{O}_{\mathcal{L}})$ (since the natural map $\mathrm{Br}(\mathcal{O}_{\mathcal{L}}) \to \mathrm{Br}(\mathcal{L})$ is injective --- cf. \cite[Ch. IV, Corollary 2.6]{Milne-EC}). It follows that $\mathcal{A}$ can be obtained from $M_n(\mathcal{O})$, and hence $A$ can be obtained from $M_n(\mathcal{K})$, by twisting using a 1-cocycle on $\Gamma$ with values in $\mathrm{PGL}_n(\mathcal{O}_{\mathcal{L}})$ (which is the automorphism group of the Azumaya $\mathcal{O}_{\mathcal{L}}$-algebra $M_n(\mathcal{O}_{\mathcal{L}})$). Then it immediately follows from Theorem \ref{T:1} that $A$ is unramified.

\vskip2mm

\noindent {\small {\bf Acknowledgements.} I would like to thank Gopal Prasad for helpful comments and the anonymous referee for a careful reading of the paper. I was partially supported by NSF grant DMS-2154408.}

\bibliographystyle{amsplain}

\end{document}